\newtheorem{thm}{Theorem}[section]
\newtheorem{prop}[thm]{Proposition}
\theoremstyle{definition}
\theoremstyle{remark}
\newtheorem{rem}[thm]{Remark}
\numberwithin{equation}{section}
\numberwithin{thm}{section}
\newcommand{\ed}{\end {document}}
\newcounter{smalllist}
\title[global solution for turbulent flow equations]{Global existence of smooth
solutions to three-dimensional turbulent flow equations}
\author[D. Bian]{Dongfen Bian}
\address{The Graduate School of China Academy of Engineering Physics,
P. O. Box 2101,\ Beijing 100088,\ PR China,}%
\email{bian\_dongfen@mail.com}
\author[B. Guo]{Boling Guo}
\address{Institute of Applied Physics and Computational Mathematics,
 P. O. Box 8009,\ Beijing 100088,\ PR China.}
\email{gbl@iapcm.ac.cn}
\begin{document}
\maketitle
\begin{abstract}
In this paper we are concerned with the global existence of smooth solutions to
 the turbulent flow equations for compressible flows in $\mathbb{R}^3$.
  The global well-posedness is proved under the condition that
 the initial data are close to the standard equilibrium state in $H^3$-framework.
 The proof relies on energy estimates about velocity, temperature,
 turbulent kinetic energy and rate of viscous dissipation. We use
 several new techniques to overcome the difficulties from the
  product of two functions and higher order norms. This is the first
  result concerning $k$-$\varepsilon$ model equations.

 \noindent{\bf AMS Subject Classification 2000:}\quad 35Q35, 35A01, 76F02.
\end{abstract}
 \vspace{.2in} {\bf  {Key words and phrases:}}{
Turbulent flow equations, compressible flows, $k$-$\varepsilon$ model,
smooth solution, global existence.}

\setlength{\baselineskip}{20pt}
\section{Introduction}

We consider in this work the turbulent flow equations for compressible flows on $\mathbb{R}^3$,
\begin{align}\label{MHD}
\begin{cases}
\rho_t+ \mbox{div}(\rho u)=0,\\
(\rho u)_t+ \mbox{div}(\rho u\otimes u)-\Delta u-\nabla\mbox{div}u+\nabla p=-\frac{2}{3}\nabla(\rho k),\\
(\rho h)_t+\mbox{div}(\rho u h)-\Delta h=\frac{Dp}{Dt}+S_k,\\
(\rho k)_t+ \mbox{div}(\rho u k)-\Delta k=G-\rho \varepsilon,\\
(\rho \varepsilon)_t+ \mbox{div}(\rho u \varepsilon)-\Delta\varepsilon
=\frac{C_1G \varepsilon}{k}-\frac{C_2 \rho \varepsilon^2}{k},\\
(\rho, u, h, k,\varepsilon)(x,t)|_{t=0}=(\rho_0(x), u_0(x), h_0(x), k_0(x), \varepsilon_0(x)),
\end{cases}
\end{align}
 with $S_k=[\mu(\frac{\partial u^i}{\partial x_j}+\frac{\partial u^j}{\partial x_i})
 -\frac{2}{3}\delta_{ij}\mu\frac{\partial u^k}{\partial x_k}]\frac{\partial u^i}{\partial x_j}
 +\frac{\mu_t}{\rho^2}\frac{\partial p}{\partial x_j}\frac{\partial \rho}{\partial x_j}$, $G
 =\frac{\partial u^i}{\partial x_j}[\mu_e(\frac{\partial u^i}{\partial x_j}
 +\frac{\partial u^j}{\partial x_i})-\frac{2}{3}\delta_{ij}(\rho k+\mu_e\frac{\partial u^k}{\partial x_k})]$,
  where $\delta_{ij}=0$ if $i\neq j$, $\delta_{ij}=1$ if $i=j$,
  and $\mu,\ \mu_t,\ C_1,\ C_2$ are four positive constants satisfying $\mu+\mu_t=\mu_e$.

 Here $\rho,\  u,\  h,\ k$ and $\varepsilon$
 denote the density, velocity, total enthalpy, turbulent kinetic energy and rate of viscous dissipation, respectively.
  The pressure $p$ is a smooth function of $\rho$. In this paper, without loss of generality,
  we have renormalized some constants to be 1.

 The system (\ref{MHD}) is formed by combining effect of turbulence on
 time-averaged Navier-Stokes equations with the $k$-$\varepsilon$ model equations.
  Turbulence stands out as a prototype of multi-scale phenomenon that occurs in nature.
   It involves wide ranges of
  spatial and temporal scales which makes it very difficult to study analytically
  and prohibitively expensive to simulate computationally.
  Up to now, there are not any general theory suitable for the turbulent flows.
    Many, if not most, flows of engineering significance are turbulent, so the
    turbulent flow regime is not just of theoretical interest.
   Hence, research about the above system (\ref{MHD}) is initial and very important.

   There are many results about the simple model, such as classical Navier-Stokes equations
   and incompressible Navier-Stokes equations with variable density.

  For classical Navier-Stokes equations (let $\rho=C$, $k=\varepsilon=0$),
    there are many authors doing research about them, but
   the uniqueness and regularity of weak solutions for general initial data are
open.  In particular, Fujita and Kato \cite{f1, f2} proved the local
well-posedness of 3D Navier-Stokes equations for large initial data and the global
 well-posedness for small initial data in the
homogeneous Sobolev space $\dot{H}^{\frac{1}{2}}$.
If taking $u_0$ small with respect to the viscosity in the Lebesgue space $L^N(\mathbb{R}^N)$,
 the classical Navier-Stokes equations are globally well-posed
(see \cite{giga1, kato}). This result has been adapted to the case of bounded domains by Y. Giga and T. Miyakawa
in \cite{giga2} and to the case of exterior domains by Y. Giga and H. Sohr in \cite{giga3}, and H. Iwashita
in \cite{iwashita}. In the half-space case, the well-posedness issue has been studied by H. Kozono in \cite{kozono}
(see also \cite{can3} for results related to critical Besov spaces with negative index of regularity).
 In the absence of heat
conduction, it was proved by Z. Xin that any non-zero smooth solution
with initial compactly supported density would blow up in finite
time(see \cite{xin}). As a reasonable starting point, we will
therefore restrict our work to solutions such that $\rho$ remains
positive.

For incompressible Navier-Stokes equations with variable density (let $k=\varepsilon=0$), there are also many results.
Specially, O. Ladyzhenskaya and V. Solonnikov \cite{o-v} studied the existence of strong (smooth) solutions with positive density
whereas the theory of global weak solutions with finite energy has been performed in the book \cite{lions1}.
  Abidi \cite{Abidi1} and Danchin-Mucha \cite{d6}-\cite{d8} have obtained several results about
incompressible Navier-Stokes
equations with variable density in the critical spaces.
 For more about well-posedness in other critical spaces, readers can
 refer to \cite{Bian, Bian2, can, can2, c1, d1, d2, d3, d4, d5, mey}.

For the system (\ref{MHD}), there is not any result. This paper is devoted to the study of the global existence of smooth solutions for the
system (\ref{MHD}) under suitable assumptions.
Our result is expressed in the following.


\begin{thm}\label{thm1}
Assume that initial data are close enough to the constant state
$(\bar{\rho},0,0,\bar{k},0)$, i.e. there exists a constant $\delta_0$ such that
if
\begin{equation} \|(\rho_0-\bar{\rho}, u_{0}, h_0, k_0-\bar{k}, \varepsilon_0)\|_{H^3(\mathbb{R}^3)}\leq \delta_0,
 \end{equation}
 then the system
(\ref{MHD}) admits a unique smooth solution $(\rho,u,h,k,\varepsilon)$ such that for any
$t\in [0,\infty)$,
\begin{equation*}
\begin{split}
&\|(\rho-\bar{\rho}, u, h, k-\bar{k}, \varepsilon)\|_{H^{3}}^2
+\int_0^t\|\nabla \rho\|_{H^2}^2+\|(\nabla u, \nabla h,\nabla k, \nabla \varepsilon)\|_{H^3}^2\mbox{d}s\\
&\leq
C\|(\rho_0-\bar{\rho},u_0, h_0, k_0-\bar{k}, \varepsilon_0)\|_{H^{3}}^2,
\end{split}
\end{equation*}
where $C$ is a positive constant.
\end{thm}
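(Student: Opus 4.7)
The plan is to prove Theorem \ref{thm1} by combining a standard local existence result with uniform-in-time a priori estimates in the $H^3$ framework, then using a continuation argument. Throughout I will write $\sigma=\rho-\bar\rho$ and $K=k-\bar k$ and work with the perturbation $(\sigma,u,h,K,\varepsilon)$, which satisfies a system whose linear part has parabolic dissipation on $(u,h,k,\varepsilon)$ and purely transport structure on $\sigma$. A standard mollification / contraction-mapping argument (e.g.\ on the symmetrized system) yields a unique local smooth solution on a short interval $[0,T_0)$, with $\rho$ staying positive and $k$ staying away from zero as long as the perturbation is $H^3$-small; this lets me divide by $\rho$ or $k$ freely in the nonlinear sources $S_k,G$ and in $C_1G\varepsilon/k-C_2\rho\varepsilon^2/k$.

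The heart of the argument is the a priori estimate. I would introduce the energy and dissipation
\begin{equation*}
E(t)=\|(\sigma,u,h,K,\varepsilon)\|_{H^3}^2,\qquad D(t)=\|\nabla\sigma\|_{H^2}^2+\|(\nabla u,\nabla h,\nabla k,\nabla\varepsilon)\|_{H^3}^2,
\end{equation*}
and work under the bootstrap assumption $\sup_{[0,T]}E(t)\le\delta^2$ with $\delta$ small enough that $\rho\ge\bar\rho/2$, $k\ge\bar k/2$. For each $|\alpha|\le 3$ I apply $\partial^\alpha$ to each equation, pair with $\partial^\alpha$ of the natural unknown, and integrate. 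The parabolic equations for $u,h,k,\varepsilon$ directly furnish the dissipation terms $\|\nabla\partial^\alpha u\|_{L^2}^2$ etc. All commutator and transport-type remainders are handled by Moser/Kato--Ponce product estimates together with the Sobolev embedding $H^2(\R^3)\hookrightarrow L^\infty$, giving bounds of the form $CE^{1/2}D$.

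The genuinely delicate steps are two. First, because the continuity equation has no diffusion, I must recover dissipation of $\nabla\sigma$. The plan is the by-now classical effective-viscous-flux / elliptic trick: rewrite the momentum equation as $-\Delta u-\nabla\mathrm{div}\,u+p'(\bar\rho)\nabla\sigma=\mathcal{R}$, take one more spatial derivative and test against $\nabla\partial^\alpha\sigma$ (for $|\alpha|\le 2$), using the continuity equation to substitute for $\partial_t\sigma$. This produces $\|\nabla\sigma\|_{H^2}^2$ on the dissipation side, at the cost of a controllable $\frac{d}{dt}$ cross term involving $\nabla u\cdot\nabla\sigma$. Second, the sources $S_k$ and $G$ are quadratic in $\nabla u$ (plus cubic via $k\,\mathrm{div}\,u$), and the $\varepsilon$-equation carries $\varepsilon^2/k$ and $G\varepsilon/k$. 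After expanding $1/k=1/\bar k-K/(\bar k\,k)$, these pieces reduce to products of three $H^3$ factors that I control via $\|\nabla u\|_{L^\infty}\lesssim\|\nabla u\|_{H^2}$ and by pairing the lone $\varepsilon$ (resp.\ $k$) factor, which carries no coercive mass, against its dissipation $\|\nabla\varepsilon\|_{L^2}$ (resp.\ $\|\nabla k\|_{L^2}$) through a Poincar\'e-type trade using that $\varepsilon$ is small in $L^2$ only up to a constant—so really I estimate these low-order source terms by absorbing them into $D(t)$ after a Cauchy--Schwarz step of the form $\int\varepsilon^2\,|\nabla\cdot|\lesssim\|\varepsilon\|_{L^\infty}\|\varepsilon\|_{L^2}\|\nabla\cdot\|_{L^2}$.

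Combining these pieces produces a differential inequality of the form
\begin{equation*}
\frac{d}{dt}\widetilde E(t)+c\,D(t)\le C\,\widetilde E(t)^{1/2}D(t),
\end{equation*}
where $\widetilde E$ is $E$ plus the small cross term from the density trick and is equivalent to $E$. Choosing $\delta_0$ so that $C\widetilde E^{1/2}\le c/2$ closes the bootstrap, giving $\widetilde E(t)+\tfrac c2\int_0^tD(s)\,ds\le \widetilde E(0)$ on $[0,T]$. The standard continuation argument then extends the local solution to $[0,\infty)$ with the stated bound. The main obstacle I anticipate is the bookkeeping for the top-order $|\alpha|=3$ estimates of $S_k$, $G$, and the $1/k$ nonlinearities, where triple products of derivatives force me to place exactly one factor in $L^\infty$, one in $L^2$ against the dissipation, and one in $L^2$ against the energy; getting every term of this type into the $E^{1/2}D$ form (rather than $E D^{1/2}$ or $D^{3/2}$, which would not close) is the computational bottleneck.
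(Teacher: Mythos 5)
Your overall architecture---a standard local existence result, an $H^3$ a priori estimate closed by a smallness bootstrap, recovery of density dissipation through a cross term coupling the momentum equation with $\nabla\sigma$, and a continuation argument---is the same as the paper's (Proposition \ref{small theorem}, Steps 1--4, Proposition \ref{thm1local}, and the iteration of Section 3; your ``test against $\nabla\partial^\alpha\sigma$ and use the continuity equation'' is the same device as the paper's $\int\frac{(a+\bar\rho)^2}{2}\nabla a\cdot u$ cross term). However, there is a genuine gap in your accounting of the nonlinearities. You claim that, after expanding $1/k$, every remainder is a product of three perturbation factors and hence of size $C\widetilde E^{1/2}D$. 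That is false here precisely because the background state has $\bar\rho\neq 0$ and $\bar k\neq 0$: the turbulent-pressure term $-\tfrac23\nabla(\rho k)$ in the momentum equation contains the linear piece $-\tfrac23\bigl(\tfrac{\bar k}{\bar\rho}\nabla\sigma+\nabla K\bigr)$; $G$ contains $-\tfrac23\rho k\,\mathrm{div}\,u$, whose leading part $-\tfrac23\bar\rho\bar k\,\mathrm{div}\,u$ is \emph{linear} in the perturbation (so your parenthetical ``cubic via $k\,\mathrm{div}\,u$'' is not correct); the $k$-equation carries the linear term $-\rho\varepsilon$; and the enthalpy equation carries $Dp/Dt=-p'(\rho)\rho\,\mathrm{div}\,u$, again linear to leading order. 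Pairing $\partial^\alpha$ of each equation with $\partial^\alpha$ of its own unknown, these produce quadratic terms with $O(1)$ coefficients, e.g. $-\bar\rho\int\partial^\alpha\varepsilon\,\partial^\alpha K$ and $-p'(\bar\rho)\bar\rho\int\mathrm{div}\,\partial^\alpha u\,\partial^\alpha h$, which cannot be absorbed by taking $\delta_0$ small; at $\alpha=0$ they involve the undissipated norms $\|\varepsilon\|_2$, $\|K\|_2$, $\|h\|_2$, so after time integration they are not controlled by $E_0$, and the asserted inequality $\frac{d}{dt}\widetilde E+cD\le C\widetilde E^{1/2}D$ does not follow from the plan as stated.

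To close the estimate these couplings must be treated by structure rather than size. The $\sigma$--$u$ pressure coupling and the $K$--$u$ coupling induced by the effective pressure $\tfrac23\rho k$ are antisymmetric and cancel after a suitable symmetrization (this is exactly what the paper's potential $F(a)$ and its weighted term $\int\frac{f'(\rho)}{a+\bar\rho}(\partial_{lmn}a)^2$ are for, and why the production part $-\tfrac23\rho k\,\mathrm{div}\,u$ of $G$ should be paired against the $-\tfrac23\nabla(\rho k)$ forcing rather than estimated crudely). The remaining one-way couplings---$-\rho\varepsilon$ feeding into the $k$-equation and $Dp/Dt$ feeding into the $h$-equation---have no antisymmetric partner and need a separate argument (a sign, a decay mechanism, or a carefully weighted combination); these are also the terms where the paper's own Step 1 bounds (\ref{velocity3}), (\ref{temperature2}), (\ref{kinetic2})--(\ref{kinetic3}) are at their most delicate, since a factor such as $f'(\rho)\rho$ or $m+\bar k$ is $O(1)$, not $O(\delta)$. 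As written, your proposal never identifies these linear-in-perturbation terms, so the bootstrap does not close; everything else (Moser/Kato--Ponce bookkeeping for $S_k$, the genuinely cubic parts of $G$, and the $1/k$ expansions) is consistent with the paper's treatment.
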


\begin{rem}
The existence of the
local solution for (\ref{MHD}) can be obtained from
the standard method based on the Banach theorem and contractivity of the operator
defined by the linearization of the problem on a small time interval (see also \cite{Nishita, J.N, volpert}).
Hence, we omit the local existence part for simplicity. The global existence
of smooth solutions will be proved by extending
the local solutions with respect to time based on a-priori global
estimates.
\end{rem}
\begin{rem}
We use several new techniques to overcome the difficulties from the product of two functions and higher order norms.
In a separate paper we consider the convergence rate of smooth solution to the constant state.
\end{rem}

 \textbf{Notation:} Throughout the paper, $C$ stands for a
general constant, and may change from line to line.
The norm $\|(A,B)\|_{X}$ is equivalent to $\|A\|_{X}+\|B\|_{X}$.
The notation $L^p(\mathbb{R}^3)$, $1\leq p\leq\infty$, stands for
 the usual Lebesgue spaces on $\mathbb{R}^3$ and $\|\cdot\|_p$ denotes its $L^p$ norm.

\section{A priori estimates}
In this section we establish a-priori estimates of
 the solutions. we assume that $(v,u,\theta,k,\varepsilon)$ is a smooth solution to
(\ref{MHD}) on the time interval $(0,T)$ with $T>0$. We shall establish the following proposition.
\begin{prop}\label{small theorem}
There exists a constant $\delta\ll1$ such that if
 \begin{equation}\label{smallcondition1}
\sup_{0\leq t\leq T}\|(\rho-\bar{\rho}, u, h, k-\bar{k}, \varepsilon)\|_{H^{3}}\leq \delta,
 \end{equation}
 then for any $t\in[0,T]$, there exists a constant $C_1>1$ such that it holds
\begin{equation}\label{smallcondition2}
\begin{split}
&\|(\rho-\bar{\rho}, u,h, k-\bar{k}, \varepsilon)\|_{H^{3}}^2
+\int_0^t\|\nabla \rho\|_{H^2}^2+\|(\nabla u,\nabla h, \nabla k, \nabla\varepsilon
)\|_{H^3}^2\mbox{d}s\\
&\leq
C_1\|(\rho_0-\bar{\rho}, u_0, h_0, k_0-\bar{k}, \varepsilon_0)\|_{H^{3}}^2.
\end{split}
 \end{equation}
\end{prop}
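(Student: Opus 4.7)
I introduce perturbation variables $\sigma=\rho-\bar\rho$ and $\kappa=k-\bar k$, and expand the pressure as $p(\rho)=p(\bar\rho)+p'(\bar\rho)\sigma+O(\sigma^2)$. After this rewriting, \eqref{MHD} linearises at $(\bar\rho,0,0,\bar k,0)$ into a system that is genuinely parabolic in $(u,h,\kappa,\varepsilon)$ and purely hyperbolic (pure transport) in $\sigma$. The target bound has the shape $\mathcal{E}(t)+\int_0^t\mathcal{D}(s)\,ds\lsm\mathcal{E}(0)$ with energy $\mathcal{E}=\|(\sigma,u,h,\kappa,\varepsilon)\|_{H^3}^2$ and dissipation $\mathcal{D}=\|\nabla\sigma\|_{H^2}^2+\|(\nabla u,\nabla h,\nabla\kappa,\nabla\varepsilon)\|_{H^3}^2$. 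I will run a Matsumura--Nishida style scheme adapted to this five-component system, with an additional effort to handle the two stiff unknowns $k$, $\varepsilon$.

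\textbf{Step 1: parabolic energy for $(u,h,\kappa,\varepsilon)$.} For each multi-index $|\alpha|\le 3$, apply $\partial^\alpha$ to the last four equations of \eqref{MHD} and test against $\partial^\alpha u$, $\partial^\alpha h$, $\partial^\alpha \kappa$, $\partial^\alpha \varepsilon$ respectively. Add the $\partial^\alpha$ continuity equation tested against $(p'(\bar\rho)/\bar\rho)\,\partial^\alpha\sigma$; the coefficient is chosen exactly so that the linear coupling $p'(\bar\rho)\nabla\sigma$ in the momentum equation cancels against the $\bar\rho\,\mbox{div}\,u$ term of the continuity equation. Integration by parts then yields an identity of the form $\tfrac{d}{dt}\mathcal{E}+c\,\|\nabla(u,h,\kappa,\varepsilon)\|_{H^3}^2\le\mathcal{R}$, where $\mathcal{R}$ collects commutators of the form $[\partial^\alpha,\rho u\cdot\nabla]$, products arising from $G$ and $S_k$, the sources $C_1G\varepsilon/k$ and $C_2\rho\varepsilon^2/k$, and the errors from freezing the coefficients at $\bar\rho$, $\bar k$.

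\textbf{Step 2: dissipation for $\nabla\sigma$.} Step 1 provides no dissipation for $\sigma$, so for $|\alpha|\le 2$ I pair $\partial^\alpha$ of the momentum equation with $\nabla\partial^\alpha\sigma$. The pressure contribution produces exactly $p'(\bar\rho)\|\nabla\partial^\alpha\sigma\|_2^2$, the time derivative on $u$ is moved off at the cost of a boundary term $\tfrac{d}{dt}\int\partial^\alpha u\cdot\nabla\partial^\alpha\sigma\,dx$, and $\partial_t\sigma$ is replaced through the continuity equation. All other pieces are either absorbable into $\mathcal{D}$ with a small prefactor, or cubic in $\mathcal{E}^{1/2}\mathcal{D}$. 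The modified energy $\mathcal{L}=\mathcal{E}+\eta\sum_{|\alpha|\le 2}\int\partial^\alpha u\cdot\nabla\partial^\alpha\sigma\,dx$ with $\eta\ll 1$ is equivalent to $\mathcal{E}$ and satisfies $\tfrac{d}{dt}\mathcal{L}+c\,\mathcal{D}\le C\delta\,\mathcal{D}$. Under the smallness assumption \eqref{smallcondition1} with $\delta$ chosen so that $C\delta<c/2$, the right-hand side is absorbed and integration in time gives \eqref{smallcondition2}.

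\textbf{Main obstacle.} The delicate work is the estimate of $\mathcal{R}$. The Navier--Stokes--energy part is standard, but the $k$-$\varepsilon$ equations produce triple-product nonlinearities that have no precedent in the literature cited. The source $C_1G\varepsilon/k$ is effectively cubic in $\nabla u$ once $G$ is expanded, with the extra singularity $1/k$ that must be linearised around $\bar k$ via $1/k=1/\bar k-\kappa/(\bar k\,k)$; likewise $S_k$ contains the $\nabla p\cdot\nabla\rho$ contribution which is quadratic in $\nabla\sigma$ at top order. For $|\alpha|=3$ the naive expansion of $\partial^\alpha(G\varepsilon/k)$ generates products of three $H^3$ functions two of which may carry derivatives of order up to three. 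The ``new techniques'' announced in the abstract amount to redistributing derivatives in these products so that at most one factor carries the top-order $\nabla^4$ (which lives in $\mathcal{D}$) while the others are controlled in $L^\infty$ through $H^2(\R^3)\hookrightarrow L^\infty$ and Moser-type product inequalities. Verifying that every such term fits the $\delta\,\mathcal{D}+\delta\,\mathcal{E}^{1/2}\mathcal{D}$ template uniformly over $|\alpha|\le 3$ is the technical heart of the argument, and is what closes the bootstrap.
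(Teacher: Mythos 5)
Your overall scheme --- $H^3$ energy estimates for $(u,h,\kappa,\varepsilon)$, a cross pairing of the momentum equation with $\nabla\partial^\alpha\sigma$ to generate the $\|\nabla\sigma\|_{H^2}^2$ dissipation, and closure by absorbing $C\delta\,\mathcal{D}$ --- is the same hybrid energy method the paper uses (the paper keeps variable-coefficient weights, via $F(a)$ and $\int\frac{f'(\rho)}{a+\bar{\rho}}(\partial_{lmn}a)^2\,dx$, rather than freezing coefficients at $(\bar\rho,\bar k)$, and only estimates orders $0$ and $3$ with interpolation in between; these are presentational differences). The genuine gap is in your Step 1. You assert that after the single cancellation of $p'(\bar\rho)\nabla\sigma$ against $\bar\rho\,\mathrm{div}\,u$, everything left in $\mathcal{R}$ is a commutator or a product term fitting the $\delta\,\mathcal{D}+\delta\,\mathcal{E}^{1/2}\mathcal{D}$ template. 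That is false for this system, because the background state has $\bar k>0$ and the equations contain terms that are \emph{linear} in the perturbation: the momentum equation carries $-\tfrac{2}{3}\bigl(\bar k\nabla\sigma+\bar\rho\nabla\kappa\bigr)$ from $-\tfrac{2}{3}\nabla(\rho k)$; the $k$-equation carries $-\tfrac{2}{3}\bar\rho\bar k\,\mathrm{div}\,u$ (from the $-\tfrac{2}{3}\delta_{ij}\rho k$ part of $G$) and $-\bar\rho\varepsilon$; and the $h$-equation carries $-p'(\bar\rho)\bar\rho\,\mathrm{div}\,u$ coming from $\frac{Dp}{Dt}$ --- a term your plan never mentions. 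None of these comes with a factor $\delta$, so they cannot be absorbed the way you claim.

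Some of these can be repaired inside your framework: the $\bar k\nabla\sigma$ piece only shifts the effective pressure coefficient, and the $(u,\kappa)$ coupling is skew --- if you test the $\kappa$-equation with $\bar k^{-1}\partial^\alpha\kappa$ the two terms $\pm\tfrac{2}{3}\int\partial^\alpha\kappa\,\mathrm{div}\,\partial^\alpha u\,dx$ cancel exactly --- but that weight has to be built into $\mathcal{E}$, otherwise the inequality $\tfrac{d}{dt}\mathcal{E}+c\,\mathcal{D}\le C\delta\,\mathcal{D}$ you write down is simply not what the computation produces. The couplings $\mathrm{div}\,u\cdot h$ and $\varepsilon\cdot\kappa$ are worse: $h$ and $\varepsilon$ do not feed back linearly into the other equations, so there is no skew partner, and at zeroth order they produce $\|h\|_2\|\nabla u\|_2$ and $\|\varepsilon\|_2\|\kappa\|_2$, quantities controlled neither by $\delta\,\mathcal{D}$ nor by $\mathcal{D}$ at all (the $L^2$ norms of $h$, $\kappa$, $\varepsilon$ are never dissipated on $\R^3$); Young plus Gronwall then yields only a bound growing in $t$, which destroys the uniform constant $C_1$ needed for the continuation argument in Section 3. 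The paper's own proof confronts exactly these terms with dedicated estimates in its Step 1 (see \eqref{velocity3}, \eqref{temperature2}, \eqref{kinetic2}, \eqref{kinetic3}); a complete argument must treat them explicitly, and your blanket claim that every remaining term is absorbable by smallness is precisely the step that fails.
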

\begin{proof}
First, let $\rho=a+\bar{\rho}$, $k=m+\bar{k}$, $f'(\rho)=\frac{p'(\rho)}{\rho}$,
 we rewrite the
system (\ref{MHD}) as follows:
\begin{align}\label{MHD1}
  \begin{cases}
   a_t+ \mbox{div}((a+\bar{\rho})u)=0,\\
   u_t+  u\cdot \nabla u-\frac{1}{a+\bar{\rho}}(\Delta u+\nabla\mbox{div}u)
   +\nabla [f(a+\bar{\rho})-f(\bar{\rho})]\\=-\frac{2}{3(a+\bar{\rho})}\nabla((a+\bar{\rho}) (m+\bar{k})),\\
 h_t+ u \cdot \nabla h-\frac{1}{a+\bar{\rho}}\Delta h=-f'(a+\bar{\rho})(a+\bar{\rho})\mbox{div}u+\frac{1}{a+\bar{\rho}}S_k,\\
  m_t+ u\cdot\nabla m-\frac{1}{a+\bar{\rho}}\Delta m=\frac{1}{a+\bar{\rho}}G- \varepsilon,\\
    \varepsilon_t+ u \cdot\nabla\varepsilon-\frac{1}{a+\bar{\rho}}\Delta\varepsilon
    =\frac{C_1G \varepsilon}{(a+\bar{\rho})(m+\bar{k})}-\frac{C_2 \varepsilon^2}{m+\bar{k}},\\
   (a, u, h, m,\varepsilon)(x,t)|_{t=0}=(a_0(x), u_0(x), h_0(x), m_0(x), \varepsilon_0(x)),
\end{cases}
\end{align}
with $S_k=[\mu(\frac{\partial u^i}{\partial x_j}+\frac{\partial u^j}{\partial x_i})
-\frac{2}{3}\delta_{ij}\mu\frac{\partial u^k}{\partial x_k}]\frac{\partial u^i}{\partial x_j}
+\frac{\mu_t}{(a+\bar{\rho})^2}\frac{\partial p}{\partial x_j}\frac{\partial a}{\partial x_j}$,
 $G=\frac{\partial u^i}{\partial x_j}[\mu_e(\frac{\partial u^i}{\partial x_j}
 +\frac{\partial u^j}{\partial x_i})-\frac{2}{3}\delta_{ij}((a+\bar{\rho})(m+ \bar{k})+\mu_e\frac{\partial u^k}{\partial x_k})]$.

From a priori
assumption (\ref{smallcondition1}) and the Sobolev inequality together with
the first equation of (\ref{MHD1}), we have
\begin{equation}\label{smallcondition3}
\sup_{x\in \mathbb{R}}|(a, a_t, \nabla a, u, \nabla u,  h, \nabla h, m, \nabla m, \varepsilon, \nabla \varepsilon)|
\leq C \|(a,u,h,m,\varepsilon)\|_{H^3}\leq C \delta.
\end{equation}
Moreover,
\begin{equation}\label{density3}
\frac{\bar{\rho}}{2}\leq \rho=a+\bar{\rho}\leq 2\bar{\rho},\ \ \  \frac{\bar{k}}{2}\leq k=m+\bar{k}\leq 2\bar{k},
\end{equation}
and
\begin{equation}\label{density4}
0<\frac{1}{C_0}\leq f'(\rho)\leq C_0<\infty,\ \ \ |f^{(n)}(\rho)|\leq C_0\ \  \mbox{for any positive integer}\  \ n,
\end{equation}
with $C_0$ a positive constant.

In what follows, we will always use the smallness assumption of $\delta$ and (\ref{smallcondition3})-(\ref{density4}).
 We divide the a priori
estimates into three steps.

Step 1: $L^2$-norms of $u,\ h,\ m,\ \varepsilon$

Multiplying the second equation of (\ref{MHD1}) by $u$ and integrating over $\mathbb{R}^3$,
 one can deduce that
\begin{equation}\label{smallvelocity1}
\begin{split}
&\frac{1}{2}\frac{d}{dt}\|u\|_{2}^{2}+\frac{1}{a+\bar{\rho}}(\|\nabla u\|_2^2+\|\mbox{div}u\|^2_2)\\
&=-\int_{\mathbb{R}^3}u\cdot \nabla u\cdot u\mbox{d}x
+\int_{\mathbb{R}^3}(f(a+\bar{\rho})-f(\bar{\rho}))\mbox{div}u\mbox{d}x\\
&+\int_{\mathbb{R}^3}\frac{1}{(a+\bar{\rho})^2}[(\nabla a \otimes u):\nabla u+\mbox{div}u(\nabla a \cdot  u)]\mbox{d}x\\
&-\int_{\mathbb{R}^3}\frac{2}{3(a+\bar{\rho})}\nabla((a+\bar{\rho})(m+\bar{k}))\cdot u\mbox{d}x.
\end{split}
\end{equation}
 With the help of H\"{o}lder's inequality, $\dot{W}^{1,2}(\mathbb{R}^3)\hookrightarrow L^6(\mathbb{R}^3)$
  and (\ref{smallcondition3}), we estimate the right-hand side of (\ref{smallvelocity1}) as
 \begin{equation}\label{velocity1}
 -\int_{\mathbb{R}^3}u\cdot \nabla u\cdot u\mbox{d}x\leq C\|u\|_6\|\nabla u\|_2\|u\|_3\leq C\delta\|\nabla u\|_2^2,
 \end{equation}
 \begin{equation}\label{velocity2}
  \begin{split}
 &\int_{\mathbb{R}^3}\frac{1}{(a+\bar{\rho})^2}[(\nabla a \otimes u):\nabla u+\mbox{div}u(\nabla a \cdot  u)]\mbox{d}x\\
 &\leq C\|\nabla a\|_2\|\nabla u\|_2\|u\|_{L^{\infty}}
 \leq C\delta\|(\nabla a,\nabla u)\|_2^2,
  \end{split}
 \end{equation}
 \begin{equation}\label{velocity3}
 \begin{split}
 &-\int_{\mathbb{R}^3}\frac{2}{3(a+\bar{\rho})}\nabla((a+\bar{\rho})(m+\bar{k}))\cdot u\mbox{d}x\\
 &=-\int_{\mathbb{R}^3}\frac{2(m+\bar{k})}{3(a+\bar{\rho})}\nabla a\cdot u
 -\frac{2}{3(a+\bar{\rho})}(a+\bar{\rho})\nabla m\cdot u\mbox{d}x
 \\&\leq C\delta\|(\nabla a,\nabla m)\|_2^2,
 \end{split}
 \end{equation}
 \begin{equation}\label{velocity4}
 \begin{split}
& \int_{\mathbb{R}^3}(f(a+\bar{\rho})-f(\bar{\rho}))\mbox{div}u\mbox{d}x\\
&=-\int_{\mathbb{R}^3}[f(a+\bar{\rho})-f(\bar{\rho})](\frac{a_t+u\cdot\nabla a}{a+\bar{\rho}})\mbox{d}x\\
 &=-\frac{\mbox{d}}{\mbox{d}t}\int_{\mathbb{R}^3}F(a)\mbox{d}x-\int_{\mathbb{R}^3}\frac{f'(\bar{\rho}+\theta a)}{a+\bar{\rho}}au\cdot\nabla a\mbox{d}x\\
 &\leq -\frac{\mbox{d}}{\mbox{d}t}\int_{\mathbb{R}^3}F(a)\mbox{d}x+C\delta\|\nabla a\|_2^2,
 \end{split}
 \end{equation}
 with $\theta \in (0,1)$ and $F(a)$ is defined as
 \begin{equation}\label{fensityfunction}
 F(a)=\int_0^a\frac{f(s+\bar{\rho})-f(\bar{\rho})}{s+\bar{\rho}}\mbox{d}s.
 \end{equation}
 Combining (\ref{velocity1})-(\ref{velocity4}) with (\ref{smallvelocity1}), we get
 \begin{equation}\label{velocity}
\frac{1}{2}\frac{d}{dt}[\|u\|_{2}^{2}+\int_{\mathbb{R}^3}F(a)\mbox{d}x]
+\frac{1}{a+\bar{\rho}}(\|\nabla u\|_2^2+\|\mbox{div}u\|^2_2)\leq C\delta \|(\nabla a, \nabla u, \nabla m)\|_2^2.
\end{equation}

Multiplying the energy equation, governing equation for turbulent kinetic energy $k$ and $\varepsilon$-equation of (\ref{MHD1}) by $h$, $m$ and $\varepsilon$ respectively, and integrating them over the whole space $\mathbb{R}^3$, similarly we
can get that
\begin{equation}\label{smalltemperature1}
\begin{split}
&\frac{1}{2}\frac{d}{dt}\|h\|_{2}^{2}+\frac{1}{a+\bar{\rho}}\|\nabla h\|_2^2\\
&=\int_{\mathbb{R}^3}\frac{1}{(a+\bar{\rho})^2}\nabla a\cdot\nabla h\cdot h\mbox{d}x
-\int_{\mathbb{R}^3}f'(a+\bar{\rho})(a+\bar{\rho})\mbox{div}u h\mbox{d}x
\\&+\int_{\mathbb{R}^3}\frac{1}{a+\bar{\rho}}S_k\cdot h\mbox{d}x-\int_{\mathbb{R}^3}u\cdot\nabla h\cdot h\mbox{d}x,
\end{split}
\end{equation}
\begin{equation}\label{smallkinetic1}
\begin{split}
&\frac{1}{2}\frac{d}{dt}\|m\|_{2}^{2}+\frac{1}{a+\bar{\rho}}\|\nabla m\|_2^2\\
&=\int_{\mathbb{R}^3}\frac{1}{(a+\bar{\rho})^2}\nabla m\cdot\nabla a\cdot m\mbox{d}x
-\int_{\mathbb{R}^3}\varepsilon\cdot m\mbox{d}x
+\int_{\mathbb{R}^3}\frac{1}{a+\bar{\rho}}G\cdot m\mbox{d}x\\&-\int_{\mathbb{R}^3}u\cdot\nabla m\cdot m\mbox{d}x,
\end{split}
\end{equation}
\begin{equation}\label{smallvarepsilon1}
\begin{split}
&\frac{1}{2}\frac{d}{dt}\|\varepsilon\|_{2}^{2}+\frac{1}{a+\bar{\rho}}\|\nabla \varepsilon\|_2^2\\
&=\int_{\mathbb{R}^3}\frac{1}{(a+\bar{\rho})^2}\nabla \varepsilon\cdot\nabla a\cdot \varepsilon\mbox{d}x
+\int_{\mathbb{R}^3}\frac{C_1G\varepsilon^2}{(a+\bar{\rho})(m+\bar{k})}\mbox{d}x
\\&-\int_{\mathbb{R}^3}\frac{C_2\varepsilon^3}{m+\bar{k}}\mbox{d}x-\int_{\mathbb{R}^3}u\cdot\nabla \varepsilon\cdot \varepsilon\mbox{d}x.
\end{split}
\end{equation}
A direct computation gives that
\begin{equation}\label{temperature1}
\int_{\mathbb{R}^3}\frac{1}{(a+\bar{\rho})^2}\nabla a\cdot\nabla h\cdot h\mbox{d}x\leq C\|\nabla a\|_2\|\nabla h\|_2\|h\|_{L^{\infty}}\leq C\delta\|(\nabla a, \nabla h)\|_2^2,
\end{equation}
\begin{equation}\label{temperature2}
-\int_{\mathbb{R}^3}f'(a+\bar{\rho})(a+\bar{\rho})\mbox{div}u h\mbox{d}x\leq C\|\nabla a\|_2\|\nabla u\|_2\|h\|_3\leq C\delta \|(\nabla a, \nabla u)\|_2^2,
\end{equation}
\begin{equation}\label{temperature3}
\begin{split}
&\int_{\mathbb{R}^3}\frac{1}{a+\bar{\rho}}S_k\cdot h\mbox{d}x\\
&=\int_{\mathbb{R}^3}\frac{1}{a+\bar{\rho}}\{[\mu(\frac{\partial u^i}{\partial x_j}+\frac{\partial u^j}{\partial x_i})-\frac{2}{3}\delta_{ij}\mu\frac{\partial u^k}{\partial x_k}]\frac{\partial u^i}{\partial x_j}+\frac{\mu_t}{(a+\bar{\rho})^2}\frac{\partial p}{\partial x_j}\frac{\partial a}{\partial x_j}\}\cdot h\mbox{d}x\\
&\leq C\delta \|(\nabla a,\nabla u)\|_2^2,
\end{split}
\end{equation}
\begin{equation}\label{temperature4}
-\int_{\mathbb{R}^3}u\cdot\nabla h\cdot h\mbox{d}x\leq C\|u\|_3\|\nabla h\|_2\|h\|_6\leq C\delta \|\nabla h\|_2^2,
\end{equation}
\begin{equation}\label{kinetic1}
\int_{\mathbb{R}^3}\frac{1}{(a+\bar{\rho})^2}\nabla m\cdot\nabla a\cdot m\mbox{d}x\leq C\|\nabla m\|_2\|\nabla a\|_2\|m\|_{L^{\infty}}\leq C\delta\|(\nabla a, \nabla m)\|_2^2,
\end{equation}
\begin{equation}\label{kinetic2}
-\int_{\mathbb{R}^3}\varepsilon\cdot m\mbox{d}x\leq C\|\varepsilon\|_2\|m\|_6\|m+\bar{k}\|_6\|m+\bar{k}\|_6\leq C\delta \|\nabla m\|_2^2,
\end{equation}
\begin{equation}\label{kinetic3}
\begin{split}
&\int_{\mathbb{R}^3}\frac{1}{a+\bar{\rho}}G\cdot m\mbox{d}x\\
&=\int_{\mathbb{R}^3}\frac{1}{a+\bar{\rho}}\frac{\partial u^i}{\partial x_j}[\mu_e(\frac{\partial u^i}{\partial x_j}+\frac{\partial u^j}{\partial x_i})-\frac{2}{3}\delta_{ij}((a+\bar{\rho})(m+ \bar{k})+\mu_e\frac{\partial u^k}{\partial x_k})]\cdot m\mbox{d}x\\
&\leq C\delta \|(\nabla u,\nabla m)\|_2^2,
\end{split}
\end{equation}
\begin{equation}\label{kinetic4}
-\int_{\mathbb{R}^3}u\cdot\nabla m\cdot m\mbox{d}x\leq C\|u\|_3\|\nabla m\|_2\|m\|_6\leq C\delta \|\nabla m\|_2^2,
\end{equation}
\begin{equation}\label{varepsilon1}
\int_{\mathbb{R}^3}\frac{1}{(a+\bar{\rho})^2}\nabla \varepsilon\cdot\nabla a\cdot \varepsilon\mbox{d}x\leq C\|\nabla \varepsilon\|_2\|\nabla a\|_2\|\varepsilon\|_{L^{\infty}}\leq C\delta\|(\nabla a, \nabla \varepsilon)\|_2^2,
\end{equation}
\begin{equation}\label{varepsilon2}
\begin{split}
&\int_{\mathbb{R}^3}\frac{C_1G\varepsilon^2}{(a+\bar{\rho})(m+\bar{k})}\mbox{d}x\\
&= \int_{\mathbb{R}^3}\frac{C_1\varepsilon^2}{(a+\bar{\rho})(m+\bar{k})}\frac{\partial u^i}{\partial x_j}[\mu_e(\frac{\partial u^i}{\partial x_j}+\frac{\partial u^j}{\partial x_i})-\frac{2}{3}\delta_{ij}((a+\bar{\rho})(m+ \bar{k})\\
&+\mu_e\frac{\partial u^k}{\partial x_k})]\mbox{d}x\leq C\delta \|(\nabla u, \nabla \varepsilon)\|_2^2,
\end{split}
\end{equation}
\begin{equation}\label{varepsilon3}
-\int_{\mathbb{R}^3}\frac{C_2\varepsilon^3}{m+\bar{k}}\mbox{d}x
\leq C \|\varepsilon\|_2\|\varepsilon\|_6\|\varepsilon\|_6\|m+\bar{k}\|_6\leq C\delta \|\nabla \varepsilon\|_2^2,
\end{equation}
\begin{equation}\label{varepsilon4}
-\int_{\mathbb{R}^3}u\cdot\nabla \varepsilon\cdot \varepsilon\mbox{d}x\leq C\|u\|_3\|\nabla \varepsilon\|_2\|\varepsilon\|_6\leq C\delta \|\nabla \varepsilon\|_2^2.
\end{equation}
The estimates (\ref{temperature1})-(\ref{varepsilon4}) together with (\ref{smalltemperature1})-(\ref{smallvarepsilon1}) imply
\begin{equation}\label{smalltemperature11}
\frac{1}{2}\frac{d}{dt}\|h\|_{2}^{2}+\frac{1}{a+\bar{\rho}}\|\nabla h\|_2^2\leq C\delta \|(\nabla a, \nabla u)\|_2^2,
\end{equation}
\begin{equation}\label{smallkinetic11}
\frac{1}{2}\frac{d}{dt}\|m\|_{2}^{2}+\frac{1}{a+\bar{\rho}}\|\nabla m\|_2^2\leq C\delta  \|(\nabla a, \nabla u)\|_2^2,
\end{equation}
\begin{equation}\label{smallvarepsilon11}
\frac{1}{2}\frac{d}{dt}\|\varepsilon\|_{2}^{2}+\frac{1}{a+\bar{\rho}}\|\nabla \varepsilon\|_2^2\leq C\delta  \|(\nabla a, \nabla u)\|_2^2.
\end{equation}

Step 2:  $L^2$-norms of $\nabla^3 u,\ \nabla^3 h,\ \nabla^3 m,\ \nabla^3 \varepsilon$

Applying the differential operator $\partial_{lmn}$ to the momentum equation of (\ref{MHD1}), then multiplying it by $\partial_{lmn}u$,
and integrating over $\mathbb{R}^3$, one gets
\begin{equation}\label{smallvelocity111j}
\begin{split}
&\frac{1}{2}\frac{d}{dt}\|\partial_{lmn}u\|_{2}^{2}\\
&=\int_{\mathbb{R}^3}\partial_{lmn}(\frac{1}{a+\bar{\rho}}\Delta u)\partial_{lmn}u\mbox{d}x
+\int_{\mathbb{R}^3}\partial_{lmn}(\frac{1}{a+\bar{\rho}}\nabla \mbox{div}u)\partial_{lmn}u\mbox{d}x\\
&-\int_{\mathbb{R}^3}\partial_{lmn}(u\cdot\nabla u)\cdot\partial_{lmn}u\mbox{d}x+\int_{\mathbb{R}^3}\partial_{lmn}
[f(a+\bar{\rho})-f(\bar{\rho})]\cdot\partial_{lmn}\mbox{div}u\mbox{d}x
\\
&-\frac{2}{3}\int_{\mathbb{R}^3}\partial_{lmn}[\frac{1}{a+\bar{\rho}}
\nabla((a+\bar{\rho})(m+\bar{k}))]\cdot\partial_{lmn}u\mbox{d}x.
\end{split}
\end{equation}
The first term on the right-hand side of (\ref{smallvelocity111j}) can be estimated as
\begin{equation}\label{velocity5}
\begin{split}
&\int_{\mathbb{R}^3}\partial_{lmn}(\frac{1}{a+\bar{\rho}}\Delta u)\partial_{lmn}u\mbox{d}x\\
&=-\int_{\mathbb{R}^3}\frac{1}{a+\bar{\rho}}|\partial_{lmn}\nabla u|^2\mbox{d}x
+\int_{\mathbb{R}^3}\frac{1}{(a+\bar{\rho})^2}\nabla\partial_{lmn}u:(\nabla a \otimes \partial_{lmn}u)\mbox{d}x\\
&+\int_{\mathbb{R}^3}[-\frac{6}{(a+\bar{\rho})^4}\partial_la\partial_ma\partial_na\Delta u
+\frac{2}{(a+\bar{\rho})^3}(\partial_{lm}a\partial_na\Delta u+\partial_{ln}a\partial_ma\Delta u\\
&+\partial_{mn}a\partial_la\Delta u+\partial_la\partial_ma\partial_n\Delta u
+\partial_la\partial_na\partial_m\Delta u+\partial_na\partial_ma\partial_l\Delta u)
\\
&-\frac{1}{(a+\bar{\rho})^2}(\partial_{lmn}a\Delta u+\partial_{lm}a\partial_n\Delta u
+\partial_{ln}a\partial_m\Delta u+\partial_{mn}a\partial_l\Delta u
\\
&+\partial_{l}a\partial_{mn}\Delta u+\partial_{m}a\partial_{ln}\Delta u+\partial_{n}a\partial_{lm}\Delta u)]\cdot \partial_{lmn}u\mbox{d}x\\
&\leq C \delta \|(\nabla^2u, \nabla^3u,\nabla^4u,\nabla^2a,\nabla^3a)\|_2^2-\int_{\mathbb{R}^3}\frac{1}{a+\bar{\rho}}|\partial_{lmn}\nabla u|^2\mbox{d}x.
\end{split}
\end{equation}
Similarly, we can estimate the second and third term on the right-hand side of (\ref{smallvelocity111j}) as
\begin{equation}\label{velocity6}
\begin{split}
&\int_{\mathbb{R}^3}\partial_{lmn}(\frac{1}{a+\bar{\rho}}\nabla \mbox{div}u)\partial_{lmn}u\mbox{d}x\\
&\leq C\delta \|(\nabla^2u, \nabla^3u,\nabla^4u,\nabla^2a,\nabla^3a)\|_2^2-\int_{\mathbb{R}^3}\frac{1}{a+\bar{\rho}}|\partial_{lmn}\mbox{div} u|^2\mbox{d}x,
\end{split}
\end{equation}
\begin{equation}\label{velocity7}
-\int_{\mathbb{R}^3}\partial_{lmn}(u\cdot\nabla u)\cdot\partial_{lmn}u\mbox{d}x\leq C\delta \|\nabla^3u\|_2^2.
\end{equation}
Now, let's estimate the fourth term on the right-hand side of (\ref{smallvelocity111j}) as
\begin{equation}\label{velocity8}
\begin{split}
&\int_{\mathbb{R}^3}\partial_{lmn}[f(a+\bar{\rho})-f(\bar{\rho})]\cdot\partial_{lmn}\mbox{div}u\mbox{d}x\\
&=-\int_{\mathbb{R}^3}\partial_{lmn}[f(a+\bar{\rho})-f(\bar{\rho})]\partial_{lmn}(\frac{a_t+u\cdot\nabla a}{a+\bar{\rho}})\mbox{d}x\\
&-\int_{\mathbb{R}^3}[f'''(\rho)\partial_la\partial_ma\partial_na
+f''(\rho)\partial_{lm}a\partial_na+f''(\rho)\partial_{ln}a\partial_ma\\
&+
f''(\rho)\partial_{mn}a\partial_la
+f'(\rho)\partial_{lmn}a]
\times[-\frac{6}{(a+\bar{\rho})^4}\partial_la\partial_ma\partial_na(a_t+u\cdot\nabla a)\\
&+\frac{2}{(a+\bar{\rho})^3}(\partial_{lm}a\partial_naa_t
+\partial_{ln}a\partial_maa_t
+\partial_{mn}a\partial_laa_t
+\partial_{l}a\partial_ma\partial_na_t\\
&+\partial_{l}a\partial_na\partial_ma_t
+\partial_{n}a\partial_ma\partial_la_t
+u\cdot\nabla a\partial_{lm}a\partial_na
+u\cdot\nabla a\partial_{ln}a\partial_ma\\
&+u\cdot\nabla a\partial_{mn}a\partial_la
+\partial_lu\cdot\nabla a\partial_{m}a\partial_na
+\partial_mu\cdot\nabla a\partial_{l}a\partial_na
+\partial_nu\cdot\nabla a\partial_{l}a\partial_ma\\&+u\cdot\partial_l\nabla a\partial_{m}a\partial_na
+u\cdot\partial_m\nabla a\partial_{l}a\partial_na
+u\cdot\partial_n\nabla a\partial_{l}a\partial_ma)
\\
&-\frac{1}{(a+\bar{\rho})^2}(a_t\partial_{lmn}a
+\partial_la_t\partial_{mn}a+\partial_ma_t\partial_{ln}a
+\partial_na_t\partial_{lm}a
+\partial_{lm}a_t\partial_{n}a
\\&+\partial_{ln}a_t\partial_{m}a
+\partial_{mn}a_t\partial_{l}a
+u\cdot\nabla a\partial_{lmn}a+\partial_lu\cdot\nabla a\partial_{mn}a
\\&+\partial_mu\cdot\nabla a\partial_{ln}a
+\partial_nu\cdot\nabla a\partial_{lm}a
+u\cdot\nabla\partial_la\partial_{mn}a
+u\cdot\nabla\partial_ma\partial_{ln}a
\\&+u\cdot\nabla\partial_na\partial_{lm}a
+\partial_{lm}u\cdot\nabla a \partial_na
+\partial_{ln}u\cdot\nabla a \partial_ma
+\partial_{mn}u\cdot\nabla a \partial_la
\\
&+u\cdot \nabla\partial_{lm}a\partial_na
+u\cdot \nabla\partial_{ln}a\partial_ma
+u\cdot \nabla\partial_{mn}a\partial_la
+\partial_lu\cdot\nabla\partial_ma\partial_na
\\
&+\partial_lu\cdot\nabla\partial_na\partial_ma
+\partial_mu\cdot\nabla\partial_la\partial_na
+\partial_mu\cdot\nabla\partial_na\partial_la
+\partial_nu\cdot\nabla\partial_la\partial_ma
\\&+\partial_nu\cdot\nabla\partial_ma\partial_la)
+\frac{1}{a+\bar{\rho}}(\partial_{lmn}a_t
+\partial_{lmn}u\cdot\nabla a
+\partial_{lm}u\cdot\nabla\partial_na
\\&+\partial_{ln}u\cdot\nabla\partial_ma
+\partial_{mn}u\cdot\nabla\partial_la
+\partial_lu\cdot\nabla\partial_{mn}a
+\partial_mu\cdot\nabla\partial_{ln}a
\\&+\partial_nu\cdot\nabla\partial_{lm}a+u\cdot\nabla\partial_{lmn}a)]\mbox{d}x\\
&\leq C\delta \|(\nabla a, \nabla^2a,\nabla a_t, \nabla^3a,\nabla^2a_t,\nabla^2u,\nabla^3u)\|_2^2-\int_{\mathbb{R}^3}[f'''(\rho)\partial_la\partial_ma\partial_na
\\&+f''(\rho)\partial_{lm}a\partial_na
+f''(\rho)\partial_{ln}a\partial_ma+
f''(\rho)\partial_{mn}a\partial_la+f'(\rho)\partial_{lmn}a]\\
&\times[\frac{1}{a+\bar{\rho}}(\partial_{lmn}a_t+u\cdot\nabla\partial_{lmn}a)]\mbox{d}x.
\end{split}
\end{equation}
Since
\begin{equation*}
\begin{split}
-\int_{\mathbb{R}^3}\frac{1}{a+\bar{\rho}}f'''(\rho)\partial_la\partial_ma\partial_na\partial_{lmn}a_t\mbox{d}x
&=\int_{\mathbb{R}^3}\partial_l(\frac{1}{a+\bar{\rho}}f'''(\rho)\partial_la\partial_ma\partial_na)\partial_{mn}a_t\mbox{d}x\\
&\leq C\delta \|(\nabla a, \nabla^2a,\nabla^2a_t)\|_2^2,
\end{split}
\end{equation*}
\begin{equation*}
\begin{split}
-\int_{\mathbb{R}^3}\frac{1}{a+\bar{\rho}}f''(\rho)\partial_{lm}a\partial_na\partial_{lmn}a_t\mbox{d}x
&=\int_{\mathbb{R}^3}\partial_l(\frac{1}{a+\bar{\rho}}f''(\rho)\partial_{lm}a\partial_na)\partial_{mn}a_t\mbox{d}x\\
&\leq C\delta \|(\nabla^2 a, \nabla^2a_t,\nabla^3a)\|_2^2,
\end{split}
\end{equation*}
\begin{equation*}
\begin{split}
-\int_{\mathbb{R}^3}\frac{1}{a+\bar{\rho}}f'''(\rho)\partial_la\partial_ma\partial_nau\cdot\nabla\partial_{lmn}a\mbox{d}x
&=\int_{\mathbb{R}^3}\partial_j(\frac{1}{a+\bar{\rho}}f'''(\rho)\partial_la\partial_ma\partial_nau^j)\partial_{lmn}a\mbox{d}x
\\&\leq C\delta \|(\nabla^2a,\nabla^3a)\|_2^2,
\end{split}
\end{equation*}
and
\begin{equation*}
\begin{split}
-\int_{\mathbb{R}^3}\frac{1}{a+\bar{\rho}}f''(\rho)\partial_{lm}a\partial_nau\cdot\nabla\partial_{lmn}a\mbox{d}x
&=\int_{\mathbb{R}^3}\partial_j(\frac{1}{a+\bar{\rho}}f''(\rho)\partial_{lm}a\partial_nau^j)\partial_{lmn}a\mbox{d}x
\\&\leq C\delta \|\nabla^3a\|_2^2,
\end{split}
\end{equation*}
 we obtain
\begin{equation}\label{velocity9}
\begin{split}
&-\int_{\mathbb{R}^3}[f'''(\rho)\partial_la\partial_ma\partial_na
+f''(\rho)\partial_{lm}a\partial_na
+f''(\rho)\partial_{ln}a\partial_ma+
f''(\rho)\partial_{mn}a\partial_la]\\&
\times[\frac{1}{a+\bar{\rho}}(\partial_{lmn}a_t+u\cdot\nabla\partial_{lmn}a)]\\
&\leq C\delta|(\nabla a, \nabla^2a, \nabla^2a_t,\nabla^3a)\|_2^2.
\end{split}
\end{equation}
Finally,
\begin{equation*}
\begin{split}
&-\int_{\mathbb{R}^3}\frac{f'(\rho)}{a+\bar{\rho}}\partial_{lmn}a(\partial_{lmn}a_t+u\cdot\nabla\partial_{lmn}a)\mbox{d}x\\
&=-\frac{1}{2}\int_{\mathbb{R}^3}\frac{f'(\rho)}{a+\bar{\rho}}(\partial_{lmn}a)^2_t\mbox{d}x
+\frac{1}{2}\int_{\mathbb{R}^3}\frac{f'(\rho)}{a+\bar{\rho}}(\partial_{lmn}a)^2\mbox{div}u+
\frac{f''(\rho)}{a+\bar{\rho}}(\partial_{lmn}a)^2u\cdot\nabla a\\
&-\frac{f'(\rho)}{(a+\bar{\rho})^2}(\partial_{lmn}a)^2u\cdot \nabla a\mbox{d}x\\
&\leq -\frac{1}{2}\frac{\mbox{d}}{\mbox{d}t}\int_{\mathbb{R}^3}\frac{f'(\rho)}{a+\bar{\rho}}(\partial_{lmn}a)^2\mbox{d}x
+\frac{1}{2}\int_{\mathbb{R}^3}[\frac{f'(\rho)}{a+\bar{\rho}}]_t(\partial_{lmn}a)^2\mbox{d}x+C\delta \|\nabla^3 a\|_2^2\\
&\leq -\frac{1}{2}\frac{\mbox{d}}{\mbox{d}t}\int_{\mathbb{R}^3}\frac{f'(\rho)}{a+\bar{\rho}}(\partial_{lmn}a)^2\mbox{d}x
+C\delta \|\nabla^3 a\|_2^2,
\end{split}
\end{equation*}
together with (\ref{velocity9}), thus (\ref{velocity8}) can be replaced by
\begin{equation}\label{velocity10}
\begin{split}
&\int_{\mathbb{R}^3}\partial_{lmn}[f(a+\bar{\rho})-f(\bar{\rho})]\cdot\partial_{lmn}\mbox{div}u\mbox{d}x\\
&\leq C\delta\|(\nabla a, \nabla a_t, \nabla^2a, \nabla^2a_t, \nabla^3a,\nabla^2u,\nabla^3u)\|_2^2\\&
-\frac{1}{2}\frac{\mbox{d}}{\mbox{d}t}\int_{\mathbb{R}^3}\frac{f'(\rho)}{a+\bar{\rho}}(\partial_{lmn}a)^2\mbox{d}x.
\end{split}
\end{equation}
The last term on the right-hand side of (\ref{smallvelocity111j}) can be estimated as
\begin{equation*}
\begin{split}
&-\frac{2}{3}\int_{\mathbb{R}^3}\partial_{lmn}[\frac{1}{a+\bar{\rho}}\nabla((a+\bar{\rho})(m+\bar{k}))]\cdot\partial_{lmn}u\mbox{d}x
\\&=\frac{2}{3}\int_{\mathbb{R}^3}\partial_{mn}[\frac{m+\bar{k}}{a+\bar{\rho}}\nabla a+\nabla m]\cdot\partial_{llmn}u\mbox{d}x\\
&=\frac{2}{3}\int_{\mathbb{R}^3}[\frac{2(m+\bar{k})}{(a+\bar{\rho})^3}\partial_ma\partial_na\nabla a-\frac{1}{(a+\bar{\rho})^2}((m+\bar{k})\partial_{mn}a\nabla a
+\partial_ma\partial_nm\nabla a\\&+\partial_na\partial_mm\nabla a
+(m+\bar{k})\partial_ma\partial_n\nabla a+(m+\bar{k})\partial_na\partial_m\nabla a)
+\frac{1}{a+\bar{\rho}}(\partial_{mn}m\nabla a\\&+\partial_mm\nabla\partial_na
+\partial_nm\nabla \partial_ma
+(m+\bar{k})\nabla \partial_{mn}a)+\partial_{mn}\nabla m]\cdot\partial_{llmn}u\mbox{d}x\\
&\leq C\delta \|(\nabla a, \nabla^2a, \nabla^2m, \nabla^3m, \nabla^4m,\nabla^4u)\|_2^2+C\int_{\mathbb{R}}(\nabla \partial_{mn}a)^2\mbox{d}x,
\end{split}
\end{equation*}
which together with (\ref{velocity5})-(\ref{velocity7}) and (\ref{velocity10}) gives
\begin{equation}\label{smallvelocity111jj}
\begin{split}
&\frac{1}{2}\frac{d}{dt}[\|\partial_{lmn}u\|_{2}^{2}+\int_{\mathbb{R}^3}\frac{f'(\rho)}{a+\bar{\rho}}(\partial_{lmn}a)^2\mbox{d}x
]+\int_{\mathbb{R}^3}\frac{1}{a+\bar{\rho}}|\partial_{lmn}\nabla u|^2\mbox{d}x
\\&+\int_{\mathbb{R}^3}\frac{1}{a+\bar{\rho}}|\partial_{lmn}\mbox{div} u|^2\mbox{d}x\\
&\leq  C\delta \|(\nabla a, \nabla a_t, \nabla^2a, \nabla^2a_t, \nabla^3 a,\nabla^2u,\nabla^3u, \nabla^2m, \nabla^3m, \nabla^4m)\|_2^2
\\
&+C\int_{\mathbb{R}}(\nabla \partial_{mn}a)^2\mbox{d}x.
\end{split}
\end{equation}
Applying $\partial_{lmn}$ to the energy equation, $k$-equation and $\varepsilon$-equation of (\ref{MHD1}), multiplying the resulting equations by
$\partial_{lmn}h$, $\partial_{lmn}m$ and $\partial_{lmn}\varepsilon$ respectively, then integrating them over the whole space $\mathbb{R}^3$, one can
prove that
\begin{equation}\label{smalltemperature1j}
\begin{split}
&\frac{1}{2}\frac{d}{dt}\|\partial_{lmn}h\|_{2}^{2}\\
&=\int_{\mathbb{R}^3}\partial_{lmn}(\frac{1}{a+\bar{\rho}}\Delta h)\cdot \partial_{lmn}h\mbox{d}x+\int_{\mathbb{R}^3}\partial_{lmn}(\frac{S_k}{a+\bar{\rho}})\cdot \partial_{lmn}h\mbox{d}x
\\&-\int_{\mathbb{R}^3}\partial_{lmn}(f'(a+\bar{\rho})(a+\bar{\rho})\mbox{div}u)\cdot\partial_{lmn} h\mbox{d}x-\int_{\mathbb{R}^3}\partial_{lmn}(u\cdot\nabla h)\cdot \partial_{lmn}h\mbox{d}x,
\end{split}
\end{equation}
\begin{equation}\label{smallkinetic1j}
\begin{split}
&\frac{1}{2}\frac{d}{dt}\|\partial_{lmn}m\|_{2}^{2}\\
&=\int_{\mathbb{R}^3}\partial_{lmn}(\frac{1}{a+\bar{\rho}}\Delta m)\cdot \partial_{lmn}m\mbox{d}x
-\int_{\mathbb{R}^3}\partial_{lmn}\varepsilon\cdot \partial_{lmn}m\mbox{d}x
\\&+\int_{\mathbb{R}^3}\partial_{lmn}(\frac{G}{a+\bar{\rho}})\cdot \partial_{lmn}m\mbox{d}x-\int_{\mathbb{R}^3}\partial_{lmn}(u\cdot\nabla m)\cdot \partial_{lmn}m\mbox{d}x,
\end{split}
\end{equation}
\begin{equation}\label{smallvarepsilon1j}
\begin{split}
&\frac{1}{2}\frac{d}{dt}\|\partial_{lmn}\varepsilon\|_{2}^{2}\\
&=\int_{\mathbb{R}^3}\partial_{lmn}(\frac{1}{(a+\bar{\rho})^2}\Delta \varepsilon)\cdot \partial_{lmn}\varepsilon\mbox{d}x
+\int_{\mathbb{R}^3}\partial_{lmn}(\frac{C_1G\varepsilon}{(a+\bar{\rho})(m+\bar{k})})\cdot\partial_{lmn}\varepsilon\mbox{d}x
\\&-\int_{\mathbb{R}^3}\partial_{lmn}(\frac{C_2\varepsilon^2}{m+\bar{k}})\cdot\partial_{lmn}\varepsilon\mbox{d}x
-\int_{\mathbb{R}^3}\partial_{lmn}(u\cdot\nabla \varepsilon)\cdot \partial_{lmn}\varepsilon\mbox{d}x.
\end{split}
\end{equation}
As same as the estimate (\ref{velocity5}), we can deduce
 \begin{equation}\label{enthalpyfirst}
 \begin{split}
&\int_{\mathbb{R}^3}\partial_{lmn}(\frac{1}{a+\bar{\rho}}\Delta h)\partial_{lmn}h\mbox{d}x\\
&\leq C \delta \|(\nabla^2h, \nabla^3h,\nabla^4h,\nabla^2a,\nabla^3a)\|_2^2-\int_{\mathbb{R}^3}\frac{1}{a+\bar{\rho}}|\partial_{lmn}\nabla h|^2\mbox{d}x,
\end{split}
\end{equation}
 \begin{equation}\label{kineticfirst}
 \begin{split}
&\int_{\mathbb{R}^3}\partial_{lmn}(\frac{1}{a+\bar{\rho}}\Delta m)\partial_{lmn}m\mbox{d}x\\
&\leq C \delta \|(\nabla^2m, \nabla^3m,\nabla^4m,\nabla^2a,\nabla^3a)\|_2^2-\int_{\mathbb{R}^3}\frac{1}{a+\bar{\rho}}|\partial_{lmn}\nabla m|^2\mbox{d}x,
\end{split}
\end{equation}
 \begin{equation}\label{varepsilonfirst}
 \begin{split}
&\int_{\mathbb{R}^3}\partial_{lmn}(\frac{1}{a+\bar{\rho}}\Delta \varepsilon)\partial_{lmn}\varepsilon\mbox{d}x\\
&\leq C \delta \|(\nabla^2\varepsilon, \nabla^3\varepsilon,\nabla^4\varepsilon,\nabla^2a,\nabla^3a)\|_2^2-\int_{\mathbb{R}^3}\frac{1}{a+\bar{\rho}}|\partial_{lmn}\nabla \varepsilon|^2\mbox{d}x.
\end{split}
\end{equation}
Again, from the H\"{o}lder inequality, (\ref{smallcondition3})-(\ref{density4}),
 $\dot{W}^{1,2}(\mathbb{R}^3)\hookrightarrow L^6(\mathbb{R}^3)$ and $H^2(\mathbb{R}^3)\hookrightarrow L^{\infty}(\mathbb{R}^3)$, we get
 \begin{equation*}
\begin{split}
&\int_{\mathbb{R}^3}\partial_{lmn}(\frac{S_k}{a+\bar{\rho}})\cdot \partial_{lmn}h\mbox{d}x
\\&=\int_{\mathbb{R}^3}\partial_{lmn}\big{\{}\frac{1}{a+\bar{\rho}}\big[\big(\mu(\frac{\partial u^i}{\partial x_j}
+\frac{\partial u^j}{\partial x_i})-\frac{2}{3}\delta_{ij}\mu\frac{\partial u^k}{\partial x_k}\big)\frac{\partial u^i}{\partial x_j}
+\frac{\mu_t}{(a+\bar{\rho})^2}\frac{\partial p}{\partial x_j}\frac{\partial a}{\partial x_j}\big]\big{\}}\cdot \partial_{lmn}h\mbox{d}x\\
&\leq C\delta \|(\nabla^3a,\nabla^3u,\nabla^4u,\nabla^3h,\nabla^4h)\|_2^2,
\end{split}
\end{equation*}
\begin{equation*}
\begin{split}
&\int_{\mathbb{R}^3}\partial_{lmn}(\frac{G}{a+\bar{\rho}})\cdot \partial_{lmn}m\mbox{d}x\\
&=\int_{\mathbb{R}^3}\partial_{lmn}\big{\{}\frac{1}{a+\bar{\rho}}\frac{\partial u^i}{\partial x_j}\big[\mu_e(\frac{\partial u^i}{\partial x_j}
+\frac{\partial u^j}{\partial x_i})-\frac{2}{3}\delta_{ij}\big((a+\bar{\rho})(m+ \bar{k})
+\mu_e\frac{\partial u^k}{\partial x_k}\big)\big]\big{\}}\cdot \partial_{lmn}m\mbox{d}x\\
& \leq C\delta \|(\nabla^2a,\nabla^3a,\nabla^3m,\nabla^4m,\nabla^3u,\nabla^4u)\|_2^2,
\end{split}
\end{equation*}
\begin{equation*}
\begin{split}
&\int_{\mathbb{R}^3}\partial_{lmn}(\frac{C_1G\varepsilon}{(a+\bar{\rho})(m+\bar{k})})\cdot\partial_{lmn}\varepsilon\mbox{d}x\\
&=\int_{\mathbb{R}^3}\partial_{lmn}\big{\{}\frac{C_1\varepsilon}{(a+\bar{\rho})(m+\bar{k})}\frac{\partial u^i}{\partial x_j}\big[\mu_e(\frac{\partial u^i}{\partial x_j}
+\frac{\partial u^j}{\partial x_i})-\frac{2}{3}\delta_{ij}\big((a+\bar{\rho})(m+ \bar{k})
\\&+\mu_e\frac{\partial u^k}{\partial x_k}\big)\big]\big{\}}\cdot\partial_{lmn}\varepsilon\mbox{d}x\\&
\leq C\delta \|(\nabla^3a,\nabla^3u,\nabla^4u,\nabla^3m,\nabla^3\varepsilon)\|_2^2,
\end{split}
\end{equation*}
 \begin{equation*}
 -\int_{\mathbb{R}^3}\partial_{lmn}(f'(a+\bar{\rho})(a+\bar{\rho})\mbox{div}u)\cdot\partial_{lmn} h\mbox{d}x
 \leq C\delta \|(\nabla^3a,\nabla^3u,\nabla^4u,\nabla^3h,\nabla^4h)\|_2^2,
 \end{equation*}
\begin{equation*}
-\int_{\mathbb{R}^3}\partial_{lmn}(u\cdot\nabla h)\cdot \partial_{lmn}h\mbox{d}x\leq C\delta \|(\nabla^3u,\nabla^3h,\nabla^4h)\|_2^2,
\end{equation*}
\begin{equation*}
-\int_{\mathbb{R}^3}\partial_{lmn}\varepsilon\cdot \partial_{lmn}m\mbox{d}x\leq C\delta \|(\nabla^3\varepsilon,\nabla^3m,\nabla^4m)\|_2^2,
\end{equation*}
\begin{equation*}
-\int_{\mathbb{R}^3}\partial_{lmn}(u\cdot\nabla m)\cdot \partial_{lmn}m\mbox{d}x\leq
C\delta \|(\nabla^3m,\nabla^4m,\nabla^3u)\|_2^2,
\end{equation*}
\begin{equation*}
-\int_{\mathbb{R}^3}\partial_{lmn}(\frac{C_2\varepsilon^2}{m+\bar{k}})\cdot\partial_{lmn}\varepsilon\mbox{d}x
\leq C\delta\|(\nabla^3m,\nabla^3\varepsilon)\|_2^2,
\end{equation*}
\begin{equation*}
-\int_{\mathbb{R}^3}\partial_{lmn}(u\cdot\nabla \varepsilon)\cdot \partial_{lmn}\varepsilon\mbox{d}x
\leq C \|(\nabla^3\varepsilon,\nabla^4\varepsilon,\nabla^3u)\|_2^2.
\end{equation*}
Incorporating the above estimates and (\ref{smalltemperature1j})-(\ref{smallvarepsilon1j}) yields that
\begin{equation}\label{smalltemperature1jj}
\begin{split}
\frac{1}{2}\frac{d}{dt}\|\partial_{lmn}h\|_{2}^{2}&+\int_{\mathbb{R}^3}\frac{1}{a+\bar{\rho}}|\partial_{lmn}\nabla h|^2\mbox{d}x
\\&\leq C\delta\|(\nabla^2a,\nabla^3a, \nabla^2h,\nabla^3h,\nabla^3u,\nabla^4u)\|_2^2,
\end{split}
\end{equation}
\begin{equation}\label{smallkinetic1jj}
\begin{split}
\frac{1}{2}\frac{d}{dt}\|\partial_{lmn}m\|_{2}^{2}&+\int_{\mathbb{R}^3}\frac{1}{a+\bar{\rho}}|\partial_{lmn}\nabla m|^2\mbox{d}x
\\&\leq C\delta\|(\nabla^2a,\nabla^3a,\nabla^2m,\nabla^3m,\nabla^3\varepsilon,\nabla^3u,\nabla^4u)\|_2^2,
\end{split}
\end{equation}
\begin{equation}\label{smallvarepsilon1jj}
\begin{split}
\frac{1}{2}\frac{d}{dt}\|\partial_{lmn}\varepsilon\|_{2}^{2}&+\int_{\mathbb{R}^3}\frac{1}{a+\bar{\rho}}|\partial_{lmn}\nabla \varepsilon|^2\mbox{d}x\\
&\leq C\delta\|(\nabla^2a,\nabla^3a,\nabla^3m,\nabla^2\varepsilon,\nabla^3\varepsilon,\nabla^3u,\nabla^4u)\|_2^2.
\end{split}
\end{equation}

Step 3:  $L^2$-norms of $\nabla a,\ \nabla^3a$

We first estimate for $\nabla a$. For this purpose, we calculate as
\begin{equation}\label{smalldensity}
\begin{split}
&\int_{\mathbb{R}^3}[\frac{1}{2}|\nabla a|^2+\frac{(a+\bar{\rho})^2}{2}\nabla a\cdot u]_t
\\&=\int_{\mathbb{R}^3}\nabla a\cdot\nabla a_t+(a+\bar{\rho})a_t\nabla a\cdot u
+\frac{(a+\bar{\rho})^2}{2}\nabla a_t\cdot u+\frac{(a+\bar{\rho})^2}{2}\nabla a\cdot u_t\mbox{d}x.
\end{split}
\end{equation}
The first term of the right-hand side of (\ref{smalldensity}) can be estimated as follows:
\begin{equation}\label{smalldensity1}
\begin{split}
\int_{\mathbb{R}^3}\nabla a\cdot\nabla a_t\mbox{d}x&=-\int_{\mathbb{R}^3}\nabla a\cdot\nabla\mbox{div}[(a+\bar{\rho})u]\mbox{d}x\\
&=-\int_{\mathbb{R}^3}\nabla a\cdot(\nabla^2a\cdot u)+\nabla a\cdot(\nabla a\cdot\nabla u)\\&+(a+\bar{\rho})\nabla a\cdot \nabla \mbox{div}u
+|\nabla a|^2\mbox{div}u\mbox{d}x
\\&\leq C\delta \|\nabla a\|_2^2-\int_{\mathbb{R}^3}(a+\bar{\rho})\nabla a\cdot\nabla\mbox{div}u\mbox{d}x.
\end{split}
\end{equation}
Also, we estimate the rest three terms as
\begin{equation}\label{smalldensity2}
\int_{\mathbb{R}^3}(a+\bar{\rho})a_t\nabla a\cdot u\mbox{d}x\leq C\delta \|(a_t,\nabla a)\|_2^2\leq C\delta\|(\nabla a,\nabla u)\|_2^2,
\end{equation}
\begin{equation}\label{smalldensity3}
\int_{\mathbb{R}^3}\frac{(a+\bar{\rho})^2}{2}\nabla a_t\cdot u\mbox{d}x\leq C\delta\|(\nabla a,\nabla u)\|_2^2,
\end{equation}
\begin{equation}\label{smalldensity4}
\begin{split}
&\int_{\mathbb{R}^3}\frac{(a+\bar{\rho})^2}{2}\nabla a\cdot u_t\mbox{d}x\\
&=\int_{\mathbb{R}^3}\frac{(a+\bar{\rho})^2}{2}\nabla a\cdot\big(-u\cdot\nabla u
+\frac{1}{a+\bar{\rho}}\Delta u+\frac{1}{a+\bar{\rho}}\nabla\mbox{div}u\\&-\nabla[f(a+\bar{\rho})-f(\bar{\rho})]
-\frac{2}{3(a+\bar{\rho})}\nabla ((a+\bar{\rho})(m+\bar{k}))\big)\mbox{d}x\\&
\leq C\delta\|(\nabla a,\nabla u)\|_2^2-\int_{\mathbb{R}^3}\frac{(a+\bar{\rho})(m+\bar{k})}{3}|\nabla a|^2\mbox{d}x\\&-\int_{\mathbb{R}^3}\frac{(a+\bar{\rho})^2}{2}f'(\rho)|\nabla a|^2\mbox{d}x
+\int_{\mathbb{R}^3}\frac{a+\bar{\rho}}{2}\nabla a\cdot(\Delta u+\nabla\mbox{div}u)\mbox{d}x.
\end{split}
\end{equation}
Since
\begin{equation*}
\begin{split}
&\int_{\mathbb{R}^3}\frac{a+\bar{\rho}}{2}\nabla a\cdot(\Delta u-\nabla\mbox{div}u)\mbox{d}x
=\int_{\mathbb{R}^3}\frac{a+\bar{\rho}}{2}\partial_ia\cdot(\partial_{jj} u^i-\partial_i\partial_ju^j)\mbox{d}x\\
&=\int_{\mathbb{R}^3}-\frac{1}{2}\partial_ia\partial_ja\partial_ju^i-\frac{a+\bar{\rho}}{2}\partial_{ij} a\partial_ju^i+\frac{1}{2}\partial_ia\partial_ja\partial_iu^j
+\frac{a+\bar{\rho}}{2}\partial_{ij} a\partial_iu^j\mbox{d}x=0,
\end{split}
\end{equation*}
together with (\ref{smalldensity})-(\ref{smalldensity4}) and (\ref{density3})-(\ref{density4}), we show that
\begin{equation}\label{smalldensity5}
\int_{\mathbb{R}^3}[\frac{1}{2}|\nabla a|^2+\frac{(a+\bar{\rho})^2}{2}\nabla a\cdot u]_t+C\|\nabla a\|_2^2
\leq C\delta \|\nabla u\|_2^2.
\end{equation}

Now, we turn to estimate for $\nabla^3 a$. Almost parallel to the inequality (\ref{smalldensity}),  we get
\begin{equation}\label{smalldensity6}
\begin{split}
&\int_{\mathbb{R}^3}[\frac{1}{2}|\partial_{lmn}a|^2+\frac{(a+\bar{\rho})^2}{2}\partial_{lmn} a \partial_{lm}u^n]_t\mbox{d}x\\
&=\int_{\mathbb{R}^3}-\partial_{lmn}a\partial_{lmn}\mbox{div}[(a+\bar{\rho})u]+(a+\bar{\rho})a_t\partial_{lmn}a\partial_{lm}u^n
\\&-\frac{(a+\bar{\rho})^2}{2}\partial_{lmn}\mbox{div}[(a+\bar{\rho})u]\partial_{lm}u^n
+\frac{(a+\bar{\rho})^2}{2}\partial_{lmn}a\partial_{lm}u^n_t\mbox{d}x.
\end{split}
\end{equation}
We estimate the right-hand side of (\ref{smalldensity6}) as follows:
\begin{equation}\label{smalldensity6j}
\begin{split}
&\int_{\mathbb{R}^3}-\partial_{lmn}a\partial_{lmn}\mbox{div}[(a+\bar{\rho})u]\mbox{d}x\\
&=\int_{\mathbb{R}^3}-\partial_{lmn}a[u\cdot \nabla \partial_{lmn}a+\partial_lu\cdot\nabla\partial_{mn}a+\partial_mu\cdot\nabla\partial_{ln}a
+\partial_nu\cdot\nabla\partial_{lm}a\\&+\partial_{lm}u\cdot\nabla\partial_{n}a
+\partial_{ln}u\cdot\nabla\partial_{m}a
+\partial_{mn}u\cdot\nabla\partial_{l}a+\partial_{lmn}u\cdot\nabla a+\partial_{lmn}a\mbox{div}u
\\&+\partial_{lm}a\partial_n\mbox{div}u
+\partial_{ln}a\partial_m\mbox{div}u
+\partial_{mn}a\partial_l\mbox{div}u
+\partial_{l}a\partial_{mn}\mbox{div}u+\partial_{m}a\partial_{ln}\mbox{div}u
\\&+\partial_{n}a\partial_{lm}\mbox{div}u+(a+\bar{\rho})\partial_{lmn}\mbox{div}u]\mbox{d}x\\
&\leq C\delta \|(\nabla^3a, \nabla^3u)\|_2^2-\int_{\mathbb{R}^3}(a+\bar{\rho})\partial_{lmn}a\partial_{lmn}\mbox{div}u\mbox{d}x,
\end{split}
\end{equation}
\begin{equation}\label{smalldensity7}
\int_{\mathbb{R}^3}(a+\bar{\rho})a_t\partial_{lmn}a\partial_{lm}u^n\mbox{d}x\leq C\delta \|(\nabla^3a, \nabla^2u)\|_2^2,
\end{equation}
\begin{equation}\label{smalldensity8}
\begin{split}
&-\int_{\mathbb{R}^3}\frac{(a+\bar{\rho})^2}{2}\partial_{lmn}\mbox{div}[(a+\bar{\rho})u]\partial_{lm}u^n\mbox{d}x\\
&=
-\int_{\mathbb{R}^3}\frac{(a+\bar{\rho})^2}{2}\partial_{lm}u^n[\partial_{lmn}(u\cdot\nabla a)+\partial_{lmn}a\mbox{div}u
+\partial_{lm}a\partial_{n}\mbox{div}u\\&+\partial_{ln}a\partial_{m}\mbox{div}u
+\partial_{mn}a\partial_{l}\mbox{div}u
+\partial_{l}a\partial_{mn}\mbox{div}u+\partial_{m}a\partial_{ln}\mbox{div}u+\partial_{n}a\partial_{lm}\mbox{div}u
\\&+(a+\bar{\rho})\partial_{lmn}\mbox{div}u]\mbox{d}x\\
&\leq C\delta \|(\nabla^2a, \nabla^3a, \nabla^2u, \nabla^3u, \nabla^4u)\|_2^2,
\end{split}
\end{equation}
\begin{equation}\label{smalldensity9}
\begin{split}
&\int_{\mathbb{R}^3}\frac{(a+\bar{\rho})^2}{2}\partial_{lmn}a\partial_{lm}u^n_t\mbox{d}x\\
&=\int_{\mathbb{R}^3}\frac{(a+\bar{\rho})^2}{2}\partial_{lmn}a\partial_{lm}[-u^i\partial_iu^n+\frac{1}{a+\bar{\rho}}\partial_{ii}u^n
+\frac{1}{a+\bar{\rho}}\partial_{in}u^i\\&-\partial_n(f(a+\bar{\rho})-f(\bar{\rho}))
-\frac{2}{3(a+\bar{\rho})}\partial_n((a+\bar{\rho})(m+\bar{k}))]\mbox{d}x
\\&\leq C\delta\|(\nabla^2a, \nabla^3a, \nabla^2u, \nabla^3u, \nabla^3m,\nabla^4m)\|_2^2
\\&+\int_{\mathbb{R}^3}\frac{a+\bar{\rho}}{2}\partial_{lmn}a(\partial_{lmii}u^n+\partial_{lmni}u^i)\mbox{d}x
-\int_{\mathbb{R}^3}\frac{(a+\bar{\rho})^2}{2}f'(\rho)(\partial_{lmn}a)^2\mbox{d}x\\&
-\int_{\mathbb{R}^3}\frac{(a+\bar{\rho})(m+\bar{k})}{3}(\partial_{lmn}a)^2\mbox{d}x.
\end{split}
\end{equation}
Noting that
\begin{equation*}
\begin{split}
&-\int_{\mathbb{R}}(a+\bar{\rho})\partial_{lmn}a\partial_{lmn}\mbox{div}u\mbox{d}x
+\int_{\mathbb{R}^3}\frac{a+\bar{\rho}}{2}\partial_{lmn}a(\partial_{lmii}u^n+\partial_{lmni}u^i)\mbox{d}x\\
&=\int_{\mathbb{R}^3}\frac{a+\bar{\rho}}{2}\partial_{lmn}a(\partial_{lmii}u^n-\partial_{lmni}u^i)\mbox{d}x\\
&=-\frac{1}{2}\int_{\mathbb{R}^3}\partial_ia\partial_{lmn}a\partial_{lmi}u^n+(a+\bar{\rho})\partial_{lmni}a\partial_{lmi}u^n
-\partial_{i}a\partial_{lmn}a\partial_{lmn}u^i\\&-(a+\bar{\rho})\partial_{lmni}a\partial_{lmn}u^i\mbox{d}x
\\&=-\frac{1}{2}\int_{\mathbb{R}^3}\partial_ia\partial_{lmn}a\partial_{lmi}u^n
-\partial_{i}a\partial_{lmn}a\partial_{lmn}u^i\mbox{d}x\\&
\leq C\delta \|(\nabla^3a, \nabla^3u)\|_2^2,
\end{split}
\end{equation*}
 based on the estimates (\ref{smalldensity6})-(\ref{smalldensity9}), with the help of (\ref{density3})-(\ref{density4})
and the interpolation inequality, one obtains
\begin{equation}\label{smalldensity6jj}
\begin{split}
&\int_{\mathbb{R}^3}[\frac{1}{2}|\partial_{lmn}a|^2+\frac{(a+\bar{\rho})^2}{2}\partial_{lmn} a \partial_{lm}u^n]_t\mbox{d}x+C\|\nabla^3 a\|_2^2
\\&\leq C\delta \|(\nabla^2a, \nabla^2u,\nabla^3u,\nabla^4u,\nabla^3m,\nabla^4m)\|_2^2.
\end{split}
\end{equation}

Step 4:  Conclusion

Consequently, multiplying (\ref{smallvelocity111jj}) by a appropriate small constants $\alpha$, together with \eqref{velocity}, \eqref{smalltemperature11}-\eqref{smallvarepsilon11}, (\ref{smalltemperature1jj})-(\ref{smallvarepsilon1jj}), (\ref{smalldensity5}) and (\ref{smalldensity6jj}), we have
\begin{equation}\label{incorporation}
\begin{split}
&\frac{d}{dt}\{\|(u,h, \nabla^3h, m,\nabla^3m,\varepsilon,\nabla^3\varepsilon)\|_{2}^{2}+\int_{\mathbb{R}^3}F(a)\mbox{d}x+
\alpha[\|\nabla^3u\|_2^2\\&+\int_{\mathbb{R}^3}\frac{f'(\rho)}{a+\bar{\rho}}(\partial_{lmn}a)^2\mbox{d}x
]+\int_{\mathbb{R}^3}\big[\frac{1}{2}|\nabla a|^2+\frac{(a+\bar{\rho})^2}{2}\nabla a\cdot u
\\&+\frac{1}{2}|\partial_{lmn}a|^2+\frac{(a+\bar{\rho})^2}{2}\partial_{lmn} a \partial_{lm}u^n\big]\mbox{d}x\}\\&+C(\alpha)\|(\nabla a,\nabla^3a, \nabla u, \nabla^4u, \nabla h,\nabla^4h, \nabla m,\nabla^4m, \nabla\varepsilon,\nabla^4\varepsilon)\|_2^2\\&\leq 0,
\end{split}
\end{equation}
where we have used the fact that
\begin{equation*}
|\nabla^ia_t|\leq C\sum_{k=1}^{i+1}(|\nabla^ka|+|\nabla^ku|),\ \ i=1,2.
\end{equation*}
Integrating the inequality (\ref{incorporation}), from (\ref{smallcondition3})-(\ref{density4}), (\ref{fensityfunction})
 and the smallness of $\alpha$, we can finish the proof of Proposition \ref{small theorem}.
 \end{proof}

\section {Proof of global existence}

We will finish the proof of Theorem \ref{thm1} in this section. First, let's state the local existence. Since it can be proved in a standard
way as that in \cite{Nishita, volpert}, we omit the proof.
\begin{prop}\label{thm1local}
Under the assumption of the Theorem \ref{thm1}, then there exists a constant $T>0$ such that the system (\ref{MHD1})
 admits a unique smooth solution $(\rho, u, h, k,\varepsilon)$ which satisfies that
 there exists a constant $C_2>1$ such that for any
$t\in [0,T]$,
\begin{equation}\label{localdependent initial}
\begin{split}
&\|(\rho-\bar{\rho}, u, h, k-\bar{k},\varepsilon)\|_{H^3}^2+\int_0^t(\|\nabla \rho\|_{H^2}^2+\|(\nabla u,\nabla h, \nabla k,
\nabla \varepsilon)\|_{H^3}^2)\mbox{d}s\\
&\leq C_2
\|(\rho_0-\bar{\rho}, u_{0}, h_0, k_0-\bar{k}, \varepsilon_0)\|_{H^3}^2.
\end{split}
\end{equation}
\end{prop}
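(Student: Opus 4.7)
The plan is to prove local existence via a standard iteration scheme, exploiting the mixed hyperbolic--parabolic structure of \eqref{MHD1}. Initialize $(\rho^{0},u^{0},h^{0},k^{0},\varepsilon^{0})\equiv(\bar\rho,0,0,\bar k,0)$, and given the $n$-th iterate define $a^{n+1}=\rho^{n+1}-\bar\rho$ by solving the linear transport equation
$$a^{n+1}_t+\mathrm{div}\!\left((a^n+\bar\rho)u^n\right)=0$$
via characteristics, then define $(u^{n+1},h^{n+1},m^{n+1},\varepsilon^{n+1})$ by solving the four linear parabolic equations obtained from \eqref{MHD1} with coefficients $1/(a^{n+1}+\bar\rho)$ and with every remaining nonlinearity (including $S_{k}$ and $G$) evaluated at the $n$-th iterate. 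Standard $L^{2}$-maximal regularity for second-order parabolic equations with $H^{3}$ coefficients places each iterate in $L^{\infty}_{T}H^{3}\cap L^{2}_{T}H^{4}(\nabla\cdot)$, provided the lower bounds $a^{n+1}+\bar\rho\geq\bar\rho/2$ and $m^{n+1}+\bar k\geq\bar k/2$ are maintained; this is secured on a short interval by the smallness of $\delta_{0}$ and the embedding $H^{3}\hookrightarrow L^{\infty}$.

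The heart of the argument is a uniform-in-$n$ bound on a common interval $[0,T]$. I would reprise the energy method of Proposition \ref{small theorem} at the linearized level: multiply each linearized equation by the new iterate and by its $\partial_{lmn}$-derivatives, integrate by parts, and absorb the dissipation. The crossing terms coming from the transport equation are controlled using exactly the auxiliary functional
$$\int_{\mathbb{R}^{3}}\Bigl[\tfrac{1}{2}|\nabla a^{n+1}|^{2}+\tfrac{(a^{n+1}+\bar\rho)^{2}}{2}\nabla a^{n+1}\cdot u^{n+1}\Bigr]\mathrm{d}x$$
and its $\partial_{lmn}$ analogue from Step 3 of the a priori estimates. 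The quartic product-of-gradients sources $S_{k}$ and $G$ are treated exactly as in Step 2, via Hölder and $\dot W^{1,2}\hookrightarrow L^{6}$, absorbing them into the parabolic dissipation at the cost of a factor of $\delta_{0}$. Summing the resulting inequalities yields a closed estimate of the form
$$\Phi^{n+1}(t)+\int_{0}^{t}\Psi^{n+1}\mathrm{d}s\leq\Phi(0)+C\int_{0}^{t}\bigl(\Phi^{n+1}+\Phi^{n}\bigr)\bigl(\Psi^{n+1}+\Psi^{n}\bigr)\mathrm{d}s,$$
where $\Phi,\Psi$ denote respectively the pointwise-in-time and dissipative quantities appearing on the left of \eqref{localdependent initial}. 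A bootstrap induction, choosing $T$ small depending on $\|(a_{0},u_{0},h_{0},m_{0},\varepsilon_{0})\|_{H^{3}}$, then propagates $\Phi^{n+1}(t)\leq C_{2}\Phi(0)$ uniformly in $n$.

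Convergence follows by a standard contraction at lower regularity. Setting $\delta a^{n}=a^{n+1}-a^{n}$ and similarly for the other unknowns, subtracting consecutive linearized systems and running an $L^{2}$ (or $H^{1}$) energy estimate, the uniform bound already obtained produces
$$\|(\delta a^{n+1},\delta u^{n+1},\delta h^{n+1},\delta m^{n+1},\delta\varepsilon^{n+1})\|_{*}\leq C\sqrt{T}\,\|(\delta a^{n},\delta u^{n},\delta h^{n},\delta m^{n},\delta\varepsilon^{n})\|_{*}.$$
For $T$ further shrunk this is a strict contraction, yielding a limit that solves \eqref{MHD1} in the distributional sense. Lower semicontinuity of norms, combined with the uniform bound, upgrades the limit to the class in \eqref{localdependent initial}; uniqueness is proved by applying the same weak-norm difference estimate to any two solutions sharing the same initial data.

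The main obstacle will be the simultaneous propagation through the iteration of the strict positivity bounds $\rho^{n}\geq\bar\rho/2$ and $k^{n}\geq\bar k/2$, needed to make the quotients $1/(\rho k)$ and $1/k$ appearing in the $\varepsilon$-equation well defined and smooth. These are not automatic from the linear parabolic solve; one must feed the $H^{3}$ bound back through $H^{3}\hookrightarrow L^{\infty}$ to see that the perturbations remain of size $O(\delta_{0})$ on $[0,T]$. A secondary difficulty is ensuring that the time $T$ can be chosen uniformly in $n$; this forces the energy bootstrap and the contraction argument to be done at the same level of regularity as in Proposition \ref{small theorem}, rather than at some intermediate space where coefficient regularity might degenerate.
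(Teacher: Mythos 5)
Your proposal is essentially the argument the paper itself invokes: the paper omits the proof of Proposition \ref{thm1local}, stating (Remark 1.2 and the citations to Matsumura--Nishida and Volpert--Hudjaev) that local existence follows from the standard linearization--iteration scheme with uniform energy bounds and a contraction at lower regularity, which is exactly what you outline. Your sketch is consistent with that standard route, so no substantive discrepancy with the paper's (omitted, cited) proof arises.
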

In the following, by a continued argument, combining the local existence and the a priori estimates proposition, we will prove the global existence of smooth
solutions.

First, suppose
\begin{equation}\label{smallE}
E_0=\|(\rho_0-\bar{\rho}, u_{0}, h_0, k_0-\bar{k}, \varepsilon_0)\|_{H^3}<\min(\delta/\sqrt{C_2},\delta/\sqrt{C_1C_2}),
\end{equation}
 where $\delta$ is defined in Proposition \ref{small theorem}.
Since the initial data satisfy
 $E_0<\delta/\sqrt{C_2}$,
  then by Proposition \ref{thm1local}, there exists a constant
$T^{*}>0$ such that there exists a unique solution on $[0,T_{*}]$ satisfying
\begin{equation}\label{localdependent initial1}
E_1:=\sup_{0\leq t\leq T_1}\|(\rho-\bar{\rho}, u, h, k-\bar{k}, \varepsilon)\|_{H^3}\leq\sqrt{ C_2}
E_0.
\end{equation}  Therefore, using the inequality $E_0<\delta/\sqrt{C_1C_2}$, from Proposition \ref{small theorem},
 we have
 \begin{equation}\label{smallE1}
 E_1\leq\sqrt{C_1}E_0<\delta/\sqrt{C_2}.
 \end{equation}
Notice that $T^*$ depends only on $E_0$. Starting from $T^*$, then the initial problem (\ref{MHD1}) with initial data $(\rho,u,h,k,\varepsilon)(T^{*})$
still has a unique solution on $[T^*, 2T^*]$, and from Proposition \ref{thm1local}, we get
$$\sup_{T^*\leq t\leq2T^*}\|(\rho-\bar{\rho}, u, h, k-\bar{k}, \varepsilon)\|_{H^3}\leq \sqrt{C_2}E_1\leq \sqrt{C_1C_2}E_0\leq \delta.$$
Again from Proposition \ref{small theorem}, one can deduce
$$E_2=\sup_{0\leq t\leq 2T_{*}}\|(\rho-\bar{\rho}, u, h, k-\bar{k}, \varepsilon)\|_{H^3}\leq \sqrt{C_1}E_0<\delta/\sqrt{C_2}.$$
Repeating the procedure for $0\leq t\leq NT_*$, $N=1,2,3,\cdots$, we can extend the local solution to infinity as far as the initial data are small enough
such that $E_0\leq\min(\delta/\sqrt{C_2},\delta/\sqrt{C_1C_2})$.  Thus the proof of Theorem \ref{thm1} is complete.\qed

\end{document}